\newcommand{\KS}{K_{\mathcal S}}
\newcommand{\tauS}{\tau_{\mathcal S}}
\newcommand{\omegaS}{\omega_1^{\mathcal S}}
\newcommand{\dd}{\mathrm d}
\newcommand{\C}{\mathcal C}
\DeclareMathOperator{\supp}{supp}
\title[Continuous self-maps of the ladder system space]{A note on the continuous self-maps of the ladder system space}
\author{Claudia Correa}
\thanks{The first author is sponsored by FAPESP (Processo n.\ 2010/17853-8).}
\address{Departamento de Matem\'atica,\hfill\break\indent Universidade de S\~ao Paulo, Brazil}
\email{claudiac.mat@gmail.com}
\author{Daniel V. Tausk}
\address{Departamento de Matem\'atica,\hfill\break\indent Universidade de S\~ao Paulo, Brazil}
\email{tausk@ime.usp.br} \urladdr{http://www.ime.usp.br/\~{}tausk}
\keywords{Ladder system space, scattered spaces, operators on continuous functions spaces.}
\subjclass[2010]{54G12, 46E15}
\date{January 23rd, 2013}
\begin{document}

\theoremstyle{plain}\newtheorem{teo}{Theorem}
\theoremstyle{plain}\newtheorem{prop}[teo]{Proposition}
\theoremstyle{plain}\newtheorem{lem}[teo]{Lemma}
\theoremstyle{plain}\newtheorem{cor}[teo]{Corollary}
\theoremstyle{definition}\newtheorem{defin}[teo]{Definition}
\theoremstyle{remark}\newtheorem{rem}[teo]{Remark}
\theoremstyle{plain}\newtheorem{assum}[teo]{Assumption}
\theoremstyle{definition}\newtheorem{example}[teo]{Example}
%\swapnumbers

\begin{abstract}
We give a partial characterization of the continuous self-maps of the ladder system space $\KS$. Our results show that
$\KS$ is highly nonrigid. We also discuss reasonable notions of ``few operators'' for spaces $C(K)$ with scattered $K$
and we show that $C(\KS)$ does not have few operators for such notions.
\end{abstract}

\maketitle

\begin{section}{Introduction}

The ladder system space $\KS$ is a standard example of a compact scattered space of finite (Cantor--Bendixson) height
(\cite[pg.\ 164]{Ark}). It was used by R. Pol \cite{Pol}
to obtain the first example of a weakly Lindel\"of Banach space $C(K)$ that is not WCG. The space $\KS$ is the one-point
compactification $\omega_1\cup\{\infty\}$ of the first uncountable ordinal $\omega_1$ endowed with the ladder system topology $\tauS$
(see Section~\ref{sec:main}). A fairly uninteresting type of continuous self-map $\phi$ of $\KS$ consists of those that
are ``almost constant'' in the sense that they have countable range (and thus yield ``small'' composition operators
$\C_\phi:f\mapsto f\circ\phi$ on $C(\KS)$ of separable range). Those include the continuous
maps that do not fix the point at infinity $\infty$. One might wonder whether $\KS$ is ``almost rigid'' in the sense that the continuous self-maps
of $\KS$ are either ``almost constant'' or are close to the identity, i.e., have lots of fixed points. We answer this question in the negative,
providing a simple example of a continuous self-map of $\KS$ having uncountable range and whose only fixed point is $\infty$
(Example~\ref{exa:movebastante}). We will show that
the existence of such a map already implies that $C(\KS)$ does not have few operators for some natural notions of
``few operators'' (Section~\ref{sec:operators}). Even though maps of uncountable range may fix only the point at infinity, we prove that
if $\phi:\KS\to\KS$ is a continuous map fixing $\infty$ then there must exist a club $F\subset\omega_1$ such that for all $\alpha\in F$ either
$\phi(\alpha)=\alpha$, or $\phi(\alpha)=\infty$ (Proposition~\ref{thm:positive}).

Another natural question is whether a continuous self-map of $\KS$ can move uncountably many limit ordinals while not mapping
them to the infinity point. We answer the latter affirmatively, showing that given a club subset $F$ of the limit ordinals
$L(\omega_1)$ then on the nonstationary set $L(\omega_1)\setminus F$ one is reasonably free to choose the value of a continuous
map (Theorem~\ref{thm:main}).

\end{section}

\begin{section}{Main results}\label{sec:main}

We denote by $L(\omega_1)$ the subset of $\omega_1$ consisting of limit ordinals and we set $S(\omega_1)=\omega_1\setminus L(\omega_1)$.
By a {\em ladder system\/} $\mathcal S$ on $\omega_1$ we mean a family $\mathcal S=(s_\alpha)_{\alpha\in L(\omega_1)}$
where, for each limit ordinal $\alpha$, we have $s_\alpha=\big\{s^n_\alpha:n\in\omega\big\}$ and $(s^n_\alpha)_{n\in\omega}$ is a strictly
increasing sequence in $S(\omega_1)$ order-converging to $\alpha$. The {\em ladder system topology\/} $\tauS$
on $\omega_1$ is the one for which the elements of $S(\omega_1)$ are isolated and the fundamental neighborhoods of a limit
ordinal $\alpha$ are unions of $\{\alpha\}$ with sets cofinite in $s_\alpha$. Then $S(\omega_1)$ is a discrete open subset
of $\omegaS=(\omega_1,\tauS)$ and $L(\omega_1)$ is a discrete closed subset of $\omegaS$. The topology $\tauS$ is finer
than the order topology. The relatively compact
subsets of $\omegaS$ are those which are almost contained in a finite union of ladders $s_\alpha$ (with ``almost''
meaning ``except for a finite subset of $\omega_1$''). The sequence $(s^n_\alpha)_{n\in\omega}$ converges to $\alpha$ in $\omegaS$
and a map $f$ defined in $\omegaS$ (taking values in an arbitrary space) is continuous if and only if $\big(f(s^n_\alpha)\big)_{n\in\omega}$ converges to $f(\alpha)$, for all $\alpha\in L(\omega_1)$. The space $\omegaS$ is locally
compact Hausdorff of height $2$ and we denote by $\KS=\omegaS\cup\{\infty\}$ its one-point compactification (which is
compact Hausdorff of height $3$). Since every point of $\omegaS$ has a countable neighborhood and compact subsets of $\omegaS$
are countable, a continuous map $\phi:\KS\to\KS$ with $\phi(\infty)\ne\infty$ has countable range. We are thus interested
in maps $\phi$ fixing the point $\infty$.

In what follows, for a subset of $\omega_1$, {\em club\/} is the abbreviation for {\em closed unbounded}.
\begin{prop}\label{thm:positive}
Let $\phi:\KS\to\KS$ be a continuous map with $\phi(\infty)=\infty$. Then there exists a club $F\subset\omega_1$
such that for all $\alpha\in F$ either $\phi(\alpha)=\alpha$, or $\phi(\alpha)=\infty$.
\end{prop}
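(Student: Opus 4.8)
The plan is to reduce to limit ordinals and then build the club $F$ as an intersection of two clubs, each controlling one of the two ways $\phi(\alpha)$ could fail to equal $\alpha$ or $\infty$, namely $\phi(\alpha)>\alpha$ and $\phi(\alpha)<\alpha$. Since $L(\omega_1)$ is itself a club, it suffices to produce a club contained in $L(\omega_1)$ on which $\phi(\alpha)\in\{\alpha,\infty\}$; the behaviour of $\phi$ on the isolated points of $S(\omega_1)$ is then irrelevant, so the conclusion for all $\alpha$ in the final club follows. Throughout I would use the continuity criterion $\phi(s^n_\alpha)\to\phi(\alpha)$, together with the remark that $\omegaS$ is open in $\KS$, so that whenever $\phi(\alpha)\ne\infty$ one has $\phi(s^n_\alpha)\ne\infty$ for all large $n$.

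First I would dispose of the ``upward'' case. Define $h:\omega_1\to\omega_1$ by $h(\xi)=\phi(\xi)$ when $\phi(\xi)\ne\infty$ and $h(\xi)=0$ otherwise, and let $F_0$ be the club of ordinals closed under $h$. Fix a limit $\alpha\in F_0$ with $\beta:=\phi(\alpha)\ne\infty$ and suppose $\beta>\alpha$. For all large $n$ we have $\phi(s^n_\alpha)\ne\infty$ and $s^n_\alpha<\alpha$, so closure gives $\phi(s^n_\alpha)=h(s^n_\alpha)<\alpha$. On the other hand $\phi(s^n_\alpha)\to\beta$ with $\beta>\alpha$ forces $\phi(s^n_\alpha)>\alpha$ for all large $n$: if $\beta$ is isolated the terms are eventually equal to $\beta$, and if $\beta$ is a limit they eventually lie in $\{\beta\}\cup\{s^m_\beta:m\ge M\}$, which consists of ordinals $>\alpha$ once $M$ is chosen with $s^M_\beta>\alpha$. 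This contradiction shows that on $F_0\cap L(\omega_1)$ one has $\phi(\alpha)=\infty$ or $\phi(\alpha)\le\alpha$.

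Next I would handle the ``downward'' (regressive) case with a pressing-down argument backed by a compactness observation. The key lemma is that for any $\gamma\in\omega_1$ the set $\phi^{-1}(C_\gamma)$ is compact, where $C_\gamma=\{\gamma\}$ if $\gamma$ is isolated and $C_\gamma=\{\gamma\}\cup s_\gamma$ if $\gamma$ is a limit: indeed $C_\gamma$ is a compact, hence closed, subset of $\KS$ not containing $\infty$, so $\phi^{-1}(C_\gamma)$ is a closed subset of the compact space $\KS$ that misses $\infty$, i.e.\ a compact subset of $\omegaS$; by the characterization recalled above it is almost contained in a finite union of ladders $s_{\delta_1}\cup\dots\cup s_{\delta_k}$. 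Now suppose, toward a contradiction, that $B^<=\{\alpha\in L(\omega_1):\phi(\alpha)<\alpha\}$ is stationary. Since $\phi$ is regressive on $B^<$, Fodor's lemma yields a stationary $B'\subset B^<$ and a single $\gamma$ with $\phi\equiv\gamma$ on $B'$. For each $\alpha\in B'$ we have $\phi(s^n_\alpha)\to\gamma$, so cofinitely many $s^n_\alpha$ lie in $\phi^{-1}(C_\gamma)$, and after discarding the finite exceptional set infinitely many of them lie in a single $s_{\delta_{i(\alpha)}}$. An infinite subset of $s_\alpha$ has supremum $\alpha$, while an infinite subset of $s_{\delta_{i(\alpha)}}$ has supremum $\delta_{i(\alpha)}$, forcing $\alpha=\delta_{i(\alpha)}\in\{\delta_1,\dots,\delta_k\}$; as $B'$ is uncountable this is absurd. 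Hence $B^<$ is nonstationary and there is a club $F_1$ disjoint from it.

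Finally I would set $F=F_0\cap F_1\cap L(\omega_1)$, which is a club. For $\alpha\in F$ with $\phi(\alpha)\ne\infty$, membership in $F_0$ gives $\phi(\alpha)\le\alpha$ and membership in $F_1$ rules out $\phi(\alpha)<\alpha$, whence $\phi(\alpha)=\alpha$. The step I expect to be the genuine obstacle is the ``upward'' case $\phi(\alpha)>\alpha$: it is not regressive, so Fodor's lemma does not apply directly, and the closure-under-$h$ device is exactly what converts the local continuity constraint $\phi(s^n_\alpha)\to\phi(\alpha)$ into the global bound needed to exclude it. The regressive case, by contrast, is routine once the compactness lemma identifying $\phi^{-1}(C_\gamma)$ with a set almost contained in finitely many ladders, together with the almost-disjointness of distinct ladders, is in place.
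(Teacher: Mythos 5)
Your proof is correct and takes essentially the same approach as the paper's: the regressive case is handled by Fodor's (Pressing Down) lemma together with compactness of preimages, and the upward case by the club of ordinals closed under your auxiliary map $h$ (the paper's $\psi$), with the final club being the same triple intersection with $L(\omega_1)$. The only real difference is cosmetic: where you derive the Fodor-step contradiction through the ladder structure of the compact set $\phi^{-1}(C_\gamma)$, the paper gets it at once from the fact that compact subsets of $\omegaS$ are countable, so a stationary (hence uncountable) fiber $\phi^{-1}(\gamma)$ is already absurd; likewise, your neighborhood-by-neighborhood contradiction in the upward case is just an unwound version of the paper's observation that $[0,\alpha]\cup\{\infty\}$ is closed.
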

\begin{proof}
For $\alpha\in\omega_1$, the level set $\phi^{-1}(\alpha)$ is a compact subset of $\omegaS$ and therefore countable.
Thus the Pressing Down Lemma (\cite[Lemma~II.6.15]{Kunen}) implies that the set $\big\{\alpha\in\omega_1:\phi(\alpha)<\alpha\big\}$ is nonstationary.
Let $F_1$ be a club disjoint from the latter. Consider the map $\psi:\omega_1\to\omega_1$ given by $\psi(\alpha)=\phi(\alpha)$
if $\phi(\alpha)\ne\infty$ and $\psi(\alpha)=0$ otherwise. By standard arguments, the set $F_2$ of ordinals $\alpha$
invariant by $\psi$ (i.e., $\beta<\alpha$ implies $\psi(\beta)<\alpha$) is club. For $\alpha\in F_2\cap L(\omega_1)$, we have that
$\phi[s_\alpha]=\big\{\phi(s^n_\alpha):n\in\omega\big\}$ is contained in the closed set $[0,\alpha]\cup\{\infty\}$ and therefore also $\phi(\alpha)$ is in
$[0,\alpha]\cup\{\infty\}$. The conclusion is obtained by taking $F=F_1\cap F_2\cap L(\omega_1)$.
\end{proof}

Proposition~\ref{thm:positive} leads to the question, whether, for $\phi$ having uncountable range, the set of fixed points of $\phi$ contains a club.
The following example shows that this is not the case.
\begin{example}\label{exa:movebastante}
Let $\phi:\KS\to\KS$ be a map whose restriction to $S(\omega_1)$ is an injection into $L(\omega_1)$ and that maps
every element of $L(\omega_1)\cup\{\infty\}$ to $\infty$. For limit $\alpha\in\omega_1$, the sequence $\big(\phi(s^n_\alpha)\big)_{n\in\omega}$
is injective and contained in $L(\omega_1)$; therefore it converges to $\infty=\phi(\alpha)$. This proves the continuity
of $\phi$ at the points of $\omega_1$. It is easy to see that a map $\psi:\KS\to\KS$ with $\psi(\infty)=\infty$ is continuous
at $\infty$ if and only if $\psi^{-1}(\alpha)$ is relatively compact in $\omegaS$ for all $\alpha\in\omega_1$ and also $\psi^{-1}[s_\alpha]$
is relatively compact in $\omegaS$ for all $\alpha\in L(\omega_1)$. It follows from this criterion that $\phi$ is continuous at $\infty$.
\end{example}

Proposition~\ref{thm:positive} says that continuous self-maps of $\KS$ (fixing $\infty$) are highly constrained in a club
subset $F$: namely every point of $F$ is either fixed or mapped to $\infty$. We now show that on the nonstationary
set $L(\omega_1)\setminus F$ a continuous self-map of $\KS$ can be chosen with a lot of freedom.
\begin{teo}\label{thm:main}
Let $\xi:A\to L(\omega_1)$ be an injective map defined in a nonstationary subset $A$ of $L(\omega_1)$. Then $\xi$ extends
to a continuous self-map $\phi$ of $\KS$ that maps $\infty$ and every element of $L(\omega_1)\setminus A$ to $\infty$.
\end{teo}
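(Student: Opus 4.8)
The plan is to extend $\xi$ by sending to the point $\infty$ every successor ordinal not otherwise used, and --- this is the key idea --- by mapping an entire \emph{cofinite} tail of each ladder $s_\alpha$, $\alpha\in A$, to the \emph{constant} value $\xi(\alpha)$, rather than threading it through the ladder $s_{\xi(\alpha)}$. The payoff of routing a whole tail to the single limit ordinal $\xi(\alpha)$ is twofold: the sequence $\big(\phi(s^n_\alpha)\big)_{n\in\omega}$ will be eventually constant equal to $\xi(\alpha)=\phi(\alpha)$, so continuity at each $\alpha\in A$ is immediate; and the image of $\phi$ will then be contained in $L(\omega_1)\cup\{\infty\}$, which avoids $S(\omega_1)$ entirely and makes continuity at $\infty$ almost free.

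The one genuinely combinatorial step is to produce pairwise disjoint tails $t_\alpha\subseteq s_\alpha$ ($\alpha\in A$), each \emph{cofinite} in $s_\alpha$ (so that it still converges to $\alpha$). First I would record that $s_\alpha\cap s_{\alpha'}$ is finite whenever $\alpha\ne\alpha'$: if $\alpha<\alpha'$ then $s_{\alpha'}\cap[0,\alpha)$ is a proper initial segment of the increasing $\omega$-sequence $s_{\alpha'}$, hence finite, while $s_\alpha\subseteq[0,\alpha)$. This alone is not enough, since a single $s_\alpha$ may meet infinitely many other ladders; here the nonstationarity of $A$ enters. I would fix a club $C\subseteq L(\omega_1)$ disjoint from $A$ with increasing continuous enumeration $(c_\nu)_{\nu<\omega_1}$. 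Each $\alpha\in A$ lies in a unique gap $(c_\nu,c_{\nu+1})$ (with the first gap taken to be $[0,c_0)$), and each gap, being an interval of countable ordinals, meets $A$ in a countable set; enumerating $A\cap(c_\nu,c_{\nu+1})$ in order type at most $\omega$ as $\alpha_0,\alpha_1,\dots$, I would set
\[
t_{\alpha_k}=\big\{\beta\in s_{\alpha_k}:\beta>c_\nu\big\}\setminus\bigcup_{j<k}s_{\alpha_j}.
\]
Each $t_{\alpha_k}$ is cofinite in $s_{\alpha_k}$: the clause $\beta>c_\nu$ discards only finitely many terms (as $s_{\alpha_k}$ converges to $\alpha_k>c_\nu$), and the subtracted set is a \emph{finite} union of finite intersections $s_{\alpha_k}\cap s_{\alpha_j}$. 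Within a gap the tails are disjoint by the lowest-index convention, and tails from different gaps are disjoint because $t_{\alpha_k}\subseteq(c_\nu,c_{\nu+1})$. I then define $\phi(\alpha)=\xi(\alpha)$ for $\alpha\in A$, $\phi(\beta)=\xi(\alpha)$ for $\beta\in t_\alpha$, and $\phi=\infty$ on every remaining point of $\KS$.

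It remains to verify continuity. At successor ordinals it is automatic, and at $\alpha\in A$ the sequence $\big(\phi(s^n_\alpha)\big)_n$ is eventually $\xi(\alpha)$ because $t_\alpha$ is cofinite in $s_\alpha$ and the tails are disjoint. At a limit $\alpha\notin A$ I must check that $\big(\phi(s^n_\alpha)\big)_n$ converges to $\infty$: any relatively compact $K\subseteq\omegaS$ is almost contained in a finite union of ladders, hence meets $L(\omega_1)$ in a finite set, so the values $\phi(s^n_\alpha)$ lying in $K$ are among finitely many limit ordinals $\xi(\alpha_1),\dots,\xi(\alpha_r)$; for each $\alpha_i$ the set $\{n:s^n_\alpha\in t_{\alpha_i}\}\subseteq\{n:s^n_\alpha\in s_{\alpha_i}\}$ is finite since $s_\alpha\cap s_{\alpha_i}$ is finite. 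Finally, continuity at $\infty$ follows from the criterion in Example~\ref{exa:movebastante}: as the image of $\phi$ avoids $S(\omega_1)$, we have $\phi^{-1}[s_\gamma]=\emptyset$ for every $\gamma\in L(\omega_1)$, while $\phi^{-1}(\zeta)=\emptyset$ for successor $\zeta$ and $\phi^{-1}(\zeta)\subseteq\{\alpha^*\}\cup t_{\alpha^*}\subseteq s_{\alpha^*}\cup\{\alpha^*\}$ for a limit $\zeta=\xi(\alpha^*)$, which is relatively compact.

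The hard part is the cofiniteness in the disjointification: making the tails disjoint is easy, but keeping each one cofinite is not, precisely because $s_\alpha$ can share points with infinitely many other ladders. The gap decomposition supplied by the nonstationarity of $A$ is exactly what reduces this to a countable, finite-at-each-stage bookkeeping, so that every ladder cedes only a finite set. This is also where the hypothesis is essential: by Proposition~\ref{thm:positive} one has no such freedom on a club, so cofinite disjoint tails along a stationary set need not exist.
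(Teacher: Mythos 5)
Your proof is correct and takes essentially the same route as the paper's: both use a club disjoint from $A$ to trim each ladder $s_\alpha$ to the part above the largest club element below $\alpha$ (your gaps $(c_\nu,c_{\nu+1})$ play exactly the role of the paper's regressive cofinal map $\lambda$), thereby confining all intersections to countable pieces, and then define the identical map sending $\alpha$ and its trimmed ladder to $\xi(\alpha)$ and everything else to $\infty$, with the same continuity checks. The only difference is presentational: where the paper observes that the intersection relation has countable equivalence classes and then invokes the standard fact that countably many almost disjoint sets can be made disjoint by finite removals, you carry out that countable disjointification explicitly by a greedy construction within each gap.
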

\begin{proof}
We claim that the ladders $s_\alpha$, $\alpha\in A$, can be made disjoint by removing a finite number of elements from each of them.
Obviously, this will not change $\tauS$. Since the ladders are already almost disjoint, the claim is trivial if $A$ is countable.
In the rest of the proof of the claim, we assume that $A$ is uncountable.
Let $F\subset\omega_1$ be a club disjoint from $A$ and, for $\alpha\in A$, let $\lambda(\alpha)$ be the largest element of
$F\cup\{0\}$ below $\alpha$. Then $\lambda:A\to\omega_1$ is regressive (i.e., $\lambda(\alpha)<\alpha$, for all $\alpha\in A$) and
cofinal (i.e., for all $\alpha\in\omega_1$, there exists $\beta_0\in A$, such that, for all $\beta\in A$,
$\beta\ge\beta_0$ implies $\lambda(\beta)\ge\alpha$). For $\alpha\in A$, remove the finite number of elements of $s_\alpha$ strictly below
$\lambda(\alpha)$; now $s_\alpha$ denotes the set thus obtained. Consider the binary relation $R$ on $A$ defined by $(\alpha,\beta)\in R\Leftrightarrow s_\alpha\cap s_\beta\ne\emptyset$.
The fact that $\lambda$ is cofinal implies that, for each $\alpha\in A$, only a countable number of elements of $A$ are $R$-related to $\alpha$.
Hence the equivalence classes defined by the equivalence relation spanned by $R$ (i.e., the smallest equivalence relation containing $R$) are also countable.
The proof of the claim is now obtained by using the
fact that a countable number of almost disjoint sets can be made disjoint by removing a finite number of elements from each of them.

The ladders $s_\alpha$, $\alpha\in A$, are now assumed to be disjoint. Define $\phi$ by mapping $\alpha$ and
each element of $s_\alpha$ to $\xi(\alpha)$, for $\alpha\in A$; map everything else to $\infty$. The continuity
of $\phi$ at the points of $A\cup S(\omega_1)$ is clear. For $\alpha\in L(\omega_1)\setminus A$, the sequence $\big(\phi(s^n_\alpha)\big)_{n\in\omega}$ is contained in $L(\omega_1)\cup\{\infty\}$ and has no constant subsequence
contained in $L(\omega_1)$. Therefore, it converges to $\infty=\phi(\alpha)$. Finally, the continuity of $\phi$ at $\infty$
follows from the criterion explained in Example~\ref{exa:movebastante}.
\end{proof}

In the statement of Theorem~\ref{thm:main} the assumptions on $\xi$ can be weakened: namely, it suffices to assume that
$\xi:A\to L(\omega_1)\cup\{\infty\}$ be a map with $\xi^{-1}(\alpha)$ finite, for all $\alpha\in L(\omega_1)$. Note that this weaker assumption on $\xi$ is necessary: if $\alpha\in L(\omega_1)$ then $\phi^{-1}(\alpha)$
is a compact subset of $\omegaS$ and hence $\phi^{-1}(\alpha)\cap L(\omega_1)$ must be finite.

\end{section}

\begin{section}{Operators on $C(\KS)$}\label{sec:operators}

We now discuss the bounded operators on the Banach space $C(\KS)$ of continuous real-valued maps on $\KS$. Given a compact Hausdorff space $K$ and $g\in C(K)$, we denote by $M_g$ the multiplication operator $f\mapsto fg$
on $C(K)$. If $g$ belongs to the space $\mathfrak B(K)$ of real-valued bounded Borel functions on $K$, we denote by $M_g^*$ the operator on $C(K)^*$
defined by $\mu\mapsto\int g\,\dd\mu$. (When $g$ is continuous, $M_g^*$ is indeed the adjoint of $M_g$.) In \cite{Survey}, an operator $T$ on $C(K)$
is called a {\em weak multiplication\/} if $T+M_g$ is weakly compact for some $g\in C(K)$ and it is called a {\em weak multiplier\/} if
$T^*+M_g^*$ is weakly compact, for some $g\in\mathfrak B(K)$. By Gantmacher's Theorem (\cite[Theorem~VI.4.8]{DS}), the adjoint of a weakly compact operator is weakly compact
and therefore every weak multiplication is a weak multiplier.
In \cite{Survey}, the space $C(K)$ is said to have {\em few operators\/} if every operator is a weak multiplier.
If $K$ is infinite and scattered then $C(K)$ cannot have few operators
in this sense: namely, in this case $C(K)$ has a complemented copy of $c_0$ (\cite[Theorem~14.26]{Fabian}) and thus is isomorphic to its closed hyperplanes.
But this is impossible if $C(K)$ has few operators (\cite[Theorem~3.2]{Survey}). However, in \cite{PZ}, it is presented (under $\clubsuit$) an example
of a nonmetrizable scattered compact space $K$ (of infinite height) for which every operator on $C(K)$ is a constant multiple of the identity plus an operator of separable range. Having this in mind, for a given compact Hausdorff space $K$, one can
consider the question of whether every operator $T$ on $C(K)$ satisfies one of the following conditions:
\begin{itemize}
\item[(a)] $T+M_g$ has separable range, for some $g\in C(K)$;
\item[(b)] $T^*+M_g^*$ has separable range, for some $g\in\mathfrak B(K)$;
\item[(c)] the restriction of $(T^*+M_g^*)^*$ to $C(K)$ has separable range, for some $g\in\mathfrak B(K)$.
\end{itemize}
Condition (a) is a natural modification of the notion of weak multiplication and (b), (c) are natural candidates for modifications
of the notion of weak multiplier. Note that (a) implies (c). However, (a) does not imply (b), as shown in Example~\ref{exa:referee} below.
The following lemma gives a simple sufficient condition for a composition operator $\C_\phi$ not to satisfy (b).
\begin{lem}\label{thm:lemanovo}
Let $K$ be a compact Hausdorff space and $\phi:K\to K$ be a continuous map. If there exists an uncountable subset $A$ of $K$ such that
$\phi\vert_A$ is injective and $\phi(A)\cap A=\emptyset$ then $T=\C_\phi$ does not satisfy condition (b).
\end{lem}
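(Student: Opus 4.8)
The plan is to test the operator $T^*+M_g^*$ on the Dirac measures $\delta_a\in C(K)^*$ indexed by the points $a\in A$, and to show that for every fixed $g\in\mathfrak B(K)$ the resulting family is uniformly separated in the total variation norm of $C(K)^*$, which forces the range of $T^*+M_g^*$ to be nonseparable. First I would record the action of the two operators on a Dirac mass. Since $\langle T^*\mu,f\rangle=\int (f\circ\phi)\,\dd\mu$, the operator $T^*=\C_\phi^*$ sends $\delta_a$ to the pushforward $\delta_{\phi(a)}$; and since $\langle M_g^*\mu,f\rangle=\int fg\,\dd\mu$, the operator $M_g^*$ sends $\delta_a$ to $g(a)\,\delta_a$. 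Hence
\[
(T^*+M_g^*)(\delta_a)=\delta_{\phi(a)}+g(a)\,\delta_a,\qquad a\in A.
\]

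The heart of the argument is a lower bound on the distance between two such images. Fix distinct $a,b\in A$. I claim that the four points $\phi(a)$, $\phi(b)$, $a$, $b$ are pairwise distinct: indeed $a\ne b$ by assumption, $\phi(a)\ne\phi(b)$ because $\phi\vert_A$ is injective, and each of $\phi(a),\phi(b)$ lies in $\phi(A)$ while each of $a,b$ lies in $A$, so the hypothesis $\phi(A)\cap A=\emptyset$ rules out the remaining four coincidences. Consequently the signed measure
\[
(T^*+M_g^*)(\delta_a)-(T^*+M_g^*)(\delta_b)=\delta_{\phi(a)}-\delta_{\phi(b)}+g(a)\,\delta_a-g(b)\,\delta_b
\]
is a combination of Dirac masses at four distinct points, so its total variation norm equals the sum of the absolute values of the coefficients, namely $2+\lvert g(a)\rvert+\lvert g(b)\rvert\ge 2$.

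This uniform estimate finishes the proof: the set $\big\{(T^*+M_g^*)(\delta_a):a\in A\big\}$ is an uncountable $2$-separated subset of the range of $T^*+M_g^*$, so that range cannot be separable. Since $g\in\mathfrak B(K)$ was arbitrary, no choice of $g$ makes $T^*+M_g^*$ have separable range, i.e.\ $T=\C_\phi$ fails condition~(b). The only genuinely delicate point is the pairwise distinctness of $\phi(a),\phi(b),a,b$, and it is exactly there that both hypotheses on $A$ are used: injectivity of $\phi\vert_A$ separates $\phi(a)$ from $\phi(b)$, while the disjointness $\phi(A)\cap A=\emptyset$ separates the images from the arguments. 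Without the latter one could have $\phi(a)=a$, and then the coefficient $1+g(a)$ at that point could be annihilated by a suitable choice of $g$, so the lower bound would collapse.
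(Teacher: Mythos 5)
Your proof is correct and takes essentially the same route as the paper: the paper's one-line argument is precisely that $T^*+M_g^*$ maps $\big\{\delta_x:x\in A\big\}$ to an uncountable discrete set, and your write-up just fills in the details of that observation (the action on Dirac masses, the pairwise distinctness of $\phi(a),\phi(b),a,b$, and the resulting $2$-separation in total variation norm).
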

\begin{proof}
Simply notice that if $g\in\mathfrak B(K)$ then $T^*+M_g^*$ maps $\big\{\delta_x:x\in A\big\}$ to an uncountable discrete set,
where $\delta_x$ denotes the delta-measure supported at $x$.
\end{proof}

\begin{example}\label{exa:referee}
If $K=S^1$ is the unit circle and $\phi:K\ni x\mapsto -x\in K$ is the antipodal map then $T=\C_\phi$ clearly satisfies (a), because $C(K)$ is separable.
However, using Lemma~\ref{thm:lemanovo} with $A$ equal to an open half circle, we obtain that $T$ does not satisfy (b).
\end{example}

We now prove the main result of this section.
\begin{teo}
If $K=\KS$ then there exists an operator $T$ on $C(K)$ that does not satisfy either (b) or (c) (and hence does not
satisfy (a)). More specifically, if $\phi$ is defined as in Example~\ref{exa:movebastante}, then $T=\C_\phi$ does not satisfy either (b) or (c).
\end{teo}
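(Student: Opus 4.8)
The plan is to establish the two failures separately, and deduce the failure of (a) for free. Condition (b) is immediate from Lemma~\ref{thm:lemanovo}: take $A=S(\omega_1)$. By construction $\phi\vert_{S(\omega_1)}$ is injective and $\phi(S(\omega_1))\subseteq L(\omega_1)$ is disjoint from $S(\omega_1)$; since $S(\omega_1)$ is uncountable, the lemma gives that $T=\C_\phi$ does not satisfy (b). Because (a) implies (c), it then suffices to refute (c) to conclude that (a) fails as well, so the bulk of the work is to show that condition (c) fails for \emph{every} $g\in\mathfrak B(K)$.

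For (c) I would first unwind the bidual operator. For $f\in C(K)$ and $\mu\in C(K)^*$ a direct computation gives $\langle(T^*+M_g^*)^*f,\mu\rangle=\int(f\circ\phi+fg)\,\dd\mu$, so the restriction of $(T^*+M_g^*)^*$ to $C(K)$ sends $f$ to the functional on $C(K)^*$ given by integration against the bounded Borel function $f\circ\phi+fg$. Testing against a Dirac measure $\delta_x$ (of norm $1$) recovers the value $(f\circ\phi+fg)(x)$. Hence, exactly in the spirit of the proof of Lemma~\ref{thm:lemanovo}, to show that this restriction has non-separable range it is enough to produce, for each fixed $g$, an uncountable family $(f_\beta)$ in $C(K)$ together with points $x_\beta$ such that the values $(f_\beta\circ\phi+f_\beta g)(x_{\beta'})$ keep the corresponding functionals pairwise far apart.

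Write $B=\phi(S(\omega_1))\subseteq L(\omega_1)$; this is uncountable, and for each $\beta\in B$ there is a unique $t_\beta\in S(\omega_1)$ with $\phi(t_\beta)=\beta$. As test functions I would take the continuous maps $f_\beta=\mathds 1_{(s_\beta\cup\{\beta\})\setminus\{t_\beta\}}$ (continuous for the same reason as the maps built in Theorem~\ref{thm:main}), and as evaluation points $x_\beta=t_\beta$. The point of deleting $t_\beta$ from the support is that $f_\beta(t_\beta)=0$, so the multiplication term vanishes at $t_\beta$ and $(f_\beta\circ\phi+f_\beta g)(t_\beta)=f_\beta(\phi(t_\beta))=f_\beta(\beta)=1$, independently of $g$. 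For $\beta'\ne\beta$ the same evaluation yields $(f_{\beta'}\circ\phi+f_{\beta'}g)(t_\beta)=f_{\beta'}(\beta)+f_{\beta'}(t_\beta)\,g(t_\beta)=\mathds 1_{t_\beta\in s_{\beta'}}\,g(t_\beta)$, since $\beta\notin s_{\beta'}\cup\{\beta'\}$ and $t_\beta\ne t_{\beta'}$. Thus the two functionals are at distance $\ge 1$ as soon as $t_\beta\notin s_{\beta'}$.

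The only obstruction is therefore the cross term $\mathds 1_{t_\beta\in s_{\beta'}}\,g(t_\beta)$, i.e.\ the possibility that $t_\beta$ lies on the ladder of some \emph{other} $\beta'$, where an adversarial $g$ could cancel the value $1$; this combinatorial interference between overlapping ladders, rather than any analytic estimate, is the main difficulty, and I would dispose of it by a free-set argument. Consider the graph on $B$ joining $\beta$ and $\beta'$ exactly when both $t_\beta\in s_{\beta'}$ and $t_{\beta'}\in s_\beta$. For fixed $\beta$, the condition $t_{\beta'}\in s_\beta$ confines $t_{\beta'}$ to the countable set $s_\beta$, and $\beta'\mapsto t_{\beta'}$ is injective, so $\beta$ has only countably many neighbours. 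A graph on an uncountable set in which every vertex has at most countably many neighbours admits an uncountable independent set $B^*$, constructed by transfinite recursion on $\omega_1$ (at each stage only countably many vertices are forbidden). For distinct $\beta,\beta'\in B^*$ at least one of $t_\beta\notin s_{\beta'}$ or $t_{\beta'}\notin s_\beta$ holds, and evaluating at the corresponding point $t_\beta$ or $t_{\beta'}$ gives distance $\ge 1$ as above. Hence $\{(T^*+M_g^*)^*f_\beta:\beta\in B^*\}$ is an uncountable $1$-separated subset of the range for every $g\in\mathfrak B(K)$, so (c) fails; together with the first paragraph this shows that $T=\C_\phi$ satisfies none of (a), (b), (c).
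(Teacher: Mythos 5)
Your proposal is correct. The refutation of (b) is exactly the paper's argument (Lemma~\ref{thm:lemanovo} with $A=S(\omega_1)$), and your identification of the restriction of $(T^*+M_g^*)^*$ to $C(\KS)$ with $f\mapsto f\circ\phi+fg\in\mathfrak B(\KS)\subset C(\KS)^{**}$ is also how the paper begins; but from there your treatment of (c) takes a genuinely different route. The paper argues by contradiction in two steps: assuming $\C_\phi+M_g$ has separable range, it first forces $g$ to vanish off a countable set, by testing against $\big\{\chi_{\{\alpha\}}:\alpha\in A\big\}$ for an uncountable $A\subset S(\omega_1)$ with $\vert g\vert\ge\varepsilon$ on $A$, and against $\big\{\chi_{s_\alpha\cup\{\alpha\}}:\alpha\in A\big\}$ for an uncountable $A\subset L(\omega_1)$; it then deduces that $M_g$ has separable range (it factors through the separable space $C(\supp g)$), hence $\C_\phi$ would too, contradicting the fact that $\C_\phi\big[C(\KS)\big]\equiv C\big(\phi[\KS]\big)$ is nonseparable because $\phi[\KS]$ contains an uncountable discrete set. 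You instead fix an arbitrary $g$ and exhibit an uncountable $1$-separated family directly in the range: your puncturing trick (deleting $t_\beta$ from the support of $\chi_{s_\beta\cup\{\beta\}}$) kills the multiplication term at the evaluation point $\delta_{t_\beta}$ independently of $g$, and the residual interference (when $t_\beta$ lies on another ladder $s_{\beta'}$) is removed by extracting an uncountable independent set from a graph of countable degree. Your choice of edge relation is the right one and not a cosmetic detail: demanding \emph{both} $t_\beta\in s_{\beta'}$ and $t_{\beta'}\in s_\beta$ is what makes the degrees countable (a fixed successor ordinal can lie on uncountably many ladders, so the one-sided relation would not do), while non-adjacency still leaves one good evaluation point per pair. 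As for what each approach buys: the paper's proof is modular and yields structural information (any $g$ witnessing (c) would have to be countably supported), at the price of soft Banach-space facts (Tietze extension, separability of $C(K)$ for countable compact $K$, nonmetrizability from an uncountable discrete subspace); yours is analytically more self-contained---only Dirac functionals are used---avoids the contradiction format by working uniformly in $g$, and proves the slightly stronger quantitative statement that the range of $T+M_g$ always contains an uncountable $1$-separated set, at the price of the combinatorial free-set lemma.
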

\begin{proof}
The fact that $T$ does not satisfy (b) follows by using Lemma~\ref{thm:lemanovo} with $A=S(\omega_1)$.
Let us prove that $T$ does not satisfy (c). Note first that, for $g\in\mathfrak B(\KS)$, the restriction of $(T^*+M_g^*)^*$
to $C(\KS)$ is identified with $T+M_g:C(\KS)\to\mathfrak B(\KS)$, where $\mathfrak B(\KS)$ is identified with a subspace
of $C(\KS)^{**}$ in the natural way. Assuming that $\C_\phi+M_g$ has separable range, we prove first that $g$ must vanish outside
a countable set. If there were uncountably many $\alpha\in S(\omega_1)$ with $g(\alpha)\ne0$, there would exist $\varepsilon>0$
and an uncountable subset $A$ of $S(\omega_1)$ with $\vert g(\alpha)\vert\ge\varepsilon$, for all $\alpha\in A$. Then
$\C_\phi+M_g$ would map $\big\{\chi_{\{\alpha\}}:\alpha\in A\big\}$ to an uncountable discrete set (where $\chi$
denotes characteristic function). Similarly, if there were uncountably many $\alpha$ in $L(\omega_1)$ with
$g(\alpha)\ne0$, there would exist $\varepsilon>0$ and an uncountable subset $A$ of $L(\omega_1)$ with $\vert g(\alpha)\vert\ge\varepsilon$,
for all $\alpha\in A$. Then $\C_\phi+M_g$ would map the set $\big\{\chi_{s_\alpha\cup\{\alpha\}}:\alpha\in A\big\}$ to an uncountable discrete set. We have thus proven that the support of $g$, $\supp g$, must be countable. This implies that $M_g$ has separable range,
since it factorizes through the restriction map $C(\KS)\to C(\supp g)$ and $C(\supp g)$ is separable. Finally,
the fact that $M_g$ has separable range implies that $\C_\phi$ has separable range as well.
This is a contradiction, because $\C_\phi\big[C(\KS)\big]\equiv C\big(\phi[\KS]\big)$ and $\phi[\KS]$ contains an uncountable discrete set.
\end{proof}

\noindent\textbf{Acknowledgments.}\enspace The authors wish to thank the anonymous referee for the valuable suggestions.

\end{section}

\end{document}